\numberwithin{equation}{section}
\newcounter{dummy}
\newcommand\myitem[1][]{\item[#1]\refstepcounter{dummy}\def\@currentlabel{#1}}
\newtheorem{thm}{Theorem}
\numberwithin{thm}{section}
\newtheorem{lem}[thm]{Lemma}
\newtheorem{prop}[thm]{Proposition}
\newtheorem{defi}[thm]{Definition}
\newtheorem{coro}[thm]{Corollary}
\newtheorem{quest}[thm]{Question}
\newtheorem*{thm*}{Theorem}
\newtheorem*{prop*}{Proposition}
\numberwithin{equation}{section}
\theoremstyle{remark}
\newcommand{\A}{\mathscr{A}}
\newcommand{\B}{\mathscr{B}}
\newcommand{\E}{\mathcal{E}}
\newcommand{\V}{\vert}
\newcommand{\R}{\mathbb{R}}
\newcommand{\N}{\mathbb{N}}
\newcommand{\C}{\mathbb{C}}
\newcommand{\Q}{\mathscr{Q}}
\newcommand{\dx}{\,\textup{d}x}
\renewcommand{\phi}{\varphi}
\DeclareMathOperator{\Lin}{Lin}
\DeclareMathOperator{\Image}{Im}
\DeclareMathOperator{\curl}{curl}
\DeclareMathOperator{\spann}{span}
\DeclareMathOperator{\divergence}{div}
\definecolor{Gump}{rgb}{0,0.6,0.4}
\definecolor{Hanks}{rgb}{0.7,0.3,0.1}
\begin{document}
\title[Complex constant rank condition]{On the complex constant rank condition and inequalities for differential operators}
\author[Schiffer]{Stefan Schiffer}
\address{Max-Planck Institute for Mathematics in the Sciences} 
\email{stefan.schiffer@mis.mpg.de}
 \subjclass[2010]{35A23,46E35}
 \keywords{Korn's inequality, Sobolev embedding, constant rank condition} 
\begin{abstract}
In this note, we study the complex constant rank condition for differential operators and its implications for coercive differential inequalities. These are inequalities of the form 
\[
\Vert A u \Vert_{L^p} \leq \Vert \mathscr{A} u \Vert_{L^q},
\]
for exponents $1\leq p,q <\infty$ and homogeneous constant-coefficient differential operators $A$ and $\mathscr{A}$. The functions $u \colon \Omega \to \R^d$ are defined on open and bounded sets $\Omega \subset \R^N$ satisfying certain regularity assumptions. Depending on the order of $A$ and $\mathscr{A}$, such an inequality might be viewed as a generalisation of either Korn's or Sobolev's inequality, respectively. In both cases, as we are on bounded domains, we assume that the Fourier symbol of $\mathscr{A}$ satisfies an algebraic condition, the complex constant rank property.

\end{abstract}
\maketitle
\section{Introduction}
\subsection{Motivation and a role-model result}
Inequalities featuring differential inequalities undeniably play a huge role in various aspects of PDEs. The type of inequalities studied in this note are stated for functions $u \colon \Omega \to \R^d$ for open and bounded domains $\Omega$ which satisfy certain regularity assumptions. The inequalities roughly fit into one of the following two types of inequalities (eg. \cite{Strauss,CFM,VS}):
\begin{enumerate}[label=(\alph*)]
    \item \emph{Korn-type inequalities:} These are inequalities that feature a constant coefficient operator $\A$ of order $k$ and estimate 
        \begin{equation} \label{Korn:ex}
            \Vert D^k u \Vert_{L^p} \leq C\Vert \A u \Vert_{L^p}
        \end{equation}
    for all $u$ in a suitable functional space $X$, for example $X= W^{k,p}_0(\Omega;\R^d)$, where $\Omega \subset \R^N$ is an open and bounded domain;
    \item \emph{Sobolev-type inequalities:} Again, taking a differential operator $\A$, such an inequality takes the form
        \begin{equation} \label{Sobolev:ex}
                \Vert D^{k-1} u \Vert_{L^q} \leq C \Vert \A u \Vert_{L^p}
        \end{equation}
    for certain exponents $p$ and $q$ and $u \in W^{k,p}(\R^N;\R^d)$ (or $u \in W^{k,p}(\Omega;\R^d)$).
\end{enumerate}

Obviously, equations \eqref{Korn:ex} and \eqref{Sobolev:ex} \emph{do not} cover the most general forms of those inequalites and, moreover, one also may combine Korn- and Sobolev-type inequalities to get so-called Korn-Maxwell-Sobolev inequalities (e.g. \cite{Neff1,GS,GLN22}).
Both inequalities \eqref{Korn:ex}, \eqref{Sobolev:ex} and their respective generalisations and extensions have been extensively studied in a vast number of works; a by no means exhaustive overview is given below. To motivate the main question answered in this note, the following Proposition \ref{propintro} displays some of the previous results regarding inequalities \eqref{Korn:ex} and \eqref{Sobolev:ex}.

\smallskip
Consider a differential operator $\A$ of order $k$ of the form 
\begin{equation} \label{intro:A}
    \A u = \sum_{\vert \alpha \vert =k} \A_{\alpha} \partial_{\alpha} u,
\end{equation}
where $\A_{\alpha} \in \Lin(\R^d;\R^l)$ are linear maps (independent of the spatial variable). The \emph{Fourier symbol} of $\A$ is given by 
\begin{equation}
    \A [\xi] = \sum_{\vert \alpha \vert=k} \A_{\alpha} \xi^{\alpha} \in \Lin(\R^d;\R^l), \quad \xi \in \R^N \setminus \{0\}.
\end{equation}
Note that the Fourier symbol is a polynomial in $\xi$; it can be therefore defined as an element of $\Lin(\C^d;\C^l)$ by the obvious extension for all $\xi \in \C^N$.
The following statement is a summary results known for the very basic  forms of  inequalities \eqref{Korn:ex} and \eqref{Sobolev:ex}.
\begin{prop} \label{propintro}
Let $\A$ be as in \eqref{intro:A} and let $\Omega$ be a bounded Lipschitz domain. \begin{enumerate} [label=(\alph*)]
    \item \label{propintro:a} Inequality \eqref{Korn:ex} is true for $X=W_0^{k,p}(\Omega;\R^d)$, $1<p<\infty$, if and only if $\A$ is \textbf{elliptic}, i.e. the Fourier symbol is injective for all $\xi \in \R^N \setminus \{0\}$;
    \item \label{propintro:b} Inequality \eqref{Korn:ex} is true for $X=W^{k,p}(\Omega;\R^d) \slash \Q$, $1<p<\infty$, for a finite-dimensional \textbf{quotient} $\Q$, if and only if $\A$ is \textbf{$\C$-elliptic},
     i.e. the Fourier symbol is injective for all $\xi \in \C^N \setminus \{0\}$;
     \item \label{propintro:c} Both statements of \ref{propintro:a} and \ref{propintro:b} are false for $p=1$ and $p=\infty$ (unless $\A = L \circ D^k$ for an injective linear operator $L$); 
     \item \label{propintro:d} Inequality \eqref{Sobolev:ex} is true for $X=W^{k,p}_0(\Omega;\R^d)$ for $1<p<N$ and $q= p*= \tfrac{Np}{N-p}$ if and only if $\A$ is elliptic;
     \item \label{propintro:e} Inequality  \eqref{Sobolev:ex} is true for $X=W^{k,1}(\R^N;\R^d)$ and $q=1*= \tfrac{N}{N-1}$ if and only if $\A$ is elliptic and \textbf{cancelling}, meaning that 
        \[
            \bigcap_{\xi \in \R^N \setminus \{0\}} \Image \A [\xi] = \{0\}.
        \]
\end{enumerate}
\end{prop}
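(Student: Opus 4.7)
The plan is to indicate for each of the five parts of Proposition~\ref{propintro} the principal analytic mechanism and where the algebraic hypothesis on $\A$ enters, since the statement compiles classical results requiring distinct techniques.

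For parts (a) and (d), the plan for sufficiency is to invoke the Mihlin--H\"ormander multiplier theorem: ellipticity provides a smooth left inverse $\A[\xi]^\dagger$ on $\R^N\setminus\{0\}$, homogeneous of degree $-k$, so that $\xi^\alpha \A[\xi]^\dagger$ is a smooth $0$-homogeneous multiplier for each $|\alpha|=k$. Extending $u\in W_0^{k,p}(\Omega;\R^d)$ by zero and applying this multiplier to $\A u$ recovers $\partial^\alpha u$ and yields \eqref{Korn:ex} for $1<p<\infty$; composing with the componentwise Sobolev embedding $\Vert D^{k-1}u\Vert_{L^{p^*}}\lesssim \Vert D^ku\Vert_{L^p}$ on $W_0^{k,p}$ then gives \eqref{Sobolev:ex}. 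Necessity in both cases would be handled by oscillatory test functions $u_n(x)=\chi(x)e^{\mathrm{i}n\xi\cdot x}v$ with $v\in\ker\A[\xi]\setminus\{0\}$, for which $\Vert D^ku_n\Vert_{L^p}\sim n^k$ while $\Vert \A u_n\Vert_{L^p}=O(n^{k-1})$, contradicting either inequality.

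For (b), the algebraic backbone to exploit is the classical equivalence: $\A$ is $\C$-elliptic if and only if every distributional solution of $\A u=0$ on $\R^N$ is an $\R^d$-valued polynomial, so that on the bounded Lipschitz domain $\Omega$ the kernel is finite-dimensional and may be identified with $\Q$. I would first prove an interior estimate $\Vert D^ku\Vert_{L^p}\le C(\Vert \A u\Vert_{L^p}+\Vert u\Vert_{L^p})$ by Mihlin exactly as in (a), and then absorb the lower-order term by a Rellich--Kondrachov compactness / contradiction argument, using that any weak limit of a contradiction sequence lies in the finite-dimensional $\Q$ and so can be quotiented out. Necessity would again rely on an oscillatory construction, but now with $\xi\in\C^N\setminus\{0\}$: failure of $\C$-ellipticity provides an infinite family of independent exponential-polynomial kernel elements, obstructing any finite-dimensional quotient.

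Parts (c) and (e) are the endpoint statements, where the Fourier-multiplier approach breaks down and genuinely harmonic-analytic input becomes necessary. For (c), the plan is to invoke Ornstein's non-inequality: unless $\A=L\circ D^k$ for an injective $L$, at least one component of $D^ku$ is recovered from $\A u$ only via a singular integral whose symbol is not bounded on $L^1$ or $L^\infty$, and concentrated test sequences witness the failure. For (e) (Van Schaftingen's endpoint Sobolev inequality), necessity of cancelling would follow from a concentration argument: given $v\in\bigcap_\xi \Image\A[\xi]\setminus\{0\}$, one realises $v\delta_0$ as a limit of $\A u_n$ with $\Vert \A u_n\Vert_{L^1}$ bounded while $\Vert D^{k-1}u_n\Vert_{L^{N/(N-1)}}$ blows up. Sufficiency is by far the deepest ingredient and the chief technical obstacle of the whole proposition: the cancelling condition permits an algebraic rewriting of $D^{k-1}u$ in terms of components of $\A u$ that, combined with Loomis--Whitney-type slicing, reduces the vector-valued estimate to the scalar Gagliardo--Nirenberg inequality at the endpoint $L^{N/(N-1)}$.
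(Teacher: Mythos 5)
The paper itself does not prove Proposition \ref{propintro}; it is stated as a summary of known results, with (a) attributed to Calder\'on--Zygmund/multiplier theory, (b) to Ka{\l}amajska, Smith and Diening--Gmeineder, (c) to Ornstein, (d) to (a) plus the Sobolev embedding, and (e) to van Schaftingen and Bourgain--Brezis; your sketch for (a), (c), the sufficiency part of (d) and (e) runs along these same lines. Your plan for (b), however, has a genuine gap. You propose to prove $\Vert D^k u\Vert_{L^p(\Omega)}\le C(\Vert\A u\Vert_{L^p(\Omega)}+\Vert u\Vert_{L^p(\Omega)})$ ``by Mihlin exactly as in (a)'' and to let $\C$-ellipticity enter only afterwards, through the finite-dimensionality of $\ker\A$, in the compactness/absorption step. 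This cannot work. First, the multiplier argument of (a) relies on extending $u$ by zero to $\R^N$, which is unavailable for $u\in W^{k,p}(\Omega;\R^d)$ without boundary conditions (a Stein-type extension destroys the control of $\A u$ near $\partial\Omega$). Second, and more fundamentally, the intermediate estimate is simply false for operators that are elliptic but not $\C$-elliptic: for the Cauchy--Riemann operator on the unit disc and $u_n(z)=z^n$ one has $\overline{\partial}u_n=0$ while $\Vert Du_n\Vert_{L^p}/\Vert u_n\Vert_{L^p}\sim n\to\infty$. Since a Mihlin-type step only sees real ellipticity, it can never yield that estimate; $\C$-ellipticity must enter the main a priori bound itself. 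This is precisely the hard content of (b), handled in the cited literature via representation formulas (Ka{\l}amajska, Smith) or, as in this paper's own Sections \ref{prelims}--\ref{proofs}, via the Ne\v{c}as negative-norm Lemma \ref{lemma1} combined with an algebraic factorization $D^s\circ D^k=L\circ\A$ of the type provided by Proposition \ref{thm:NST}.

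A second, smaller gap concerns necessity in (d). For $u_n=\chi e^{\mathrm{i}n\xi\cdot x}v$ with $v\in\ker\A[\xi]$, both $\Vert D^{k-1}u_n\Vert_{L^{p^*}}$ and $\Vert\A u_n\Vert_{L^p}$ are of order $n^{k-1}$, so \eqref{Sobolev:ex} is \emph{not} contradicted; the blow-up of $\Vert D^k u_n\Vert_{L^p}$ is irrelevant for that inequality, because the Sobolev scaling is critical under pure oscillation. One needs concentration transverse to $\xi$ instead: taking $u_\lambda(x)=\chi(x)H(\lambda\,\xi\cdot x)v$ for a compactly supported bump $H$, the vanishing of the leading symbol gives $\Vert\A u_\lambda\Vert_{L^p}\lesssim\lambda^{k-1-1/p}$, while $\Vert D^{k-1}u_\lambda\Vert_{L^{p^*}}\gtrsim\lambda^{k-1-1/p^*}$, and the ratio blows up like $\lambda^{1/N}$, which proves necessity of ellipticity. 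Your necessity arguments for (a), (b) (exponential solutions with complex frequencies spanning an infinite-dimensional kernel, as also used in the paper's proof of Theorem \ref{thm1}) and (e) (concentration to $v\delta_0$ for $v$ in the intersection of the images) are sound; your description of the sufficiency proof of (e) is a fair caricature of van Schaftingen's duality-plus-slicing mechanism, which is the same mechanism the paper later exploits in Lemmas \ref{lemma4}--\ref{lemma5}.
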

Indeed, \ref{propintro:a} is a consequence of Calderon-Zygmund theory/multiplier theorems (e.g. \cite{CZ,CZ2,FM,GuRa}). The inequality corresponding to \ref{propintro:b} relies on results on $\C$-elliptic operators \cite{Kala,Smith} and is proven there and in \cite{DG}. Note that the inequality being valid in $W^{k,p}(\Omega;\R^d) /\Q$ is equivalent to
\[
\inf_{v \in \Q} \Vert D^k(u-v) \Vert_{L^p} \leq \Vert \A u \Vert_{L^p};
\]
if for example $k=1$ and $\A$ is the \emph{symmetric gradient}, then $\Q$ is the space of linear maps $v= Bx$ for skew-symmetric matrices $B$.

Statement \ref{propintro:c} is commonly known as \emph{Ornstein's non-inequality}, cf. \cite{Ornstein} for Ornstein's original treatment and \cite{CFM,KK0, KirchKris, FG} for more recent developments and improvements.

Having shown Korn's inequality \ref{propintro:a}, 
 the Sobolev-type inequality \ref{propintro:d} is no surprise, it just follows from \ref{propintro:a} and the standard Sobolev embedding theorem. That inequality \eqref{Sobolev:ex} is true for $p=1$ and the critical exponent $N/(N-1)$, i.e. statement \ref{propintro:e}, is, on the contrary,  far from trivial. It has been shown by van Schaftingen in \cite{VS} using a remarkable estimate due to Bourgain \& Brezis (cf. \cite{BB1,BB2}, see also \cite{VS2,SVS,Spector,GS,VSG} for some recent results). In its original form, this estimate states that the inequality 
\[
 \int f \phi \dx  \leq C \Vert f\Vert_{L^1} \Vert \nabla \phi \Vert_{L^N} 
\]
is true for any $\phi \in C_c^{\infty}(\R^N;\R^N)$ and all $f \in L^1(\R^N;\R^N)$ that satisfy the additional constraint $\divergence f=0$. A modification of this estimate will also play an important role in the proof of the second main theorem which is stated below. We shall also mention that this estimate is far from trivial if we consider divergence-free $L^1$-functions on domains $\Omega$ instead; therefore, as in the works \cite{GmR,GS}, we need to be even more careful with the regularity of $\Omega$.
\subsection{Main results} 
The statements of Proposition \ref{propintro} might be viewed as a rather harsh restriction on the operator $\A$ (not many operators are elliptic or even $\C$-elliptic). Suggesting another viewpoint, these restrictions are so harsh because the condition for the left-hand-side of inequalities \eqref{Korn:ex} and \eqref{Sobolev:ex} is rather weak- for example, \eqref{Korn:ex} might be viewed as 
    \begin{equation} \label{Korn:2}
        \Vert A u \Vert_{L^p} \leq C \Vert \A u \Vert_{L^p}
    \end{equation}
for \emph{any} differential operator $A$ of order $k$. The left-hand-side $A=D^k$ correpsonds to the worst case scenario for inequality \eqref{Korn:2}.

\begin{quest}
Given a differential operator $\A$, can we characterise differential operators $A$ such that inequality \eqref{Korn:2} (or a suitable version of \eqref{Sobolev:ex}) is true?
\end{quest}

As it was noted in multiple contexts, the correct replacement for of being \emph{elliptic} often is the so called \emph{constant rank condition}, cf. \cite{Murat,Wilcox,FM}. That is, the rank of the Fourier symbol $\A[\xi]$ is constant along all $\xi \in \R^N \setminus \{0\}$. In this work, however, we are more interested in inequalities on domains without imposing further boundary conditions (i.e. Proposition \ref{propintro} \ref{propintro:b}). Therefore, we work with the corresponding replacement/weakening of $\C$-ellipticity (cf. \cite{GSPoinc}).
\begin{defi}
We say that a differential operator $\A$ as in \eqref{intro:A} satisfies the \textbf{complex constant rank condition} (or the constant rank property with respect to $\C$), if there is $r \in \N$ such that for all $\xi \in \C^N \setminus \{0\}$
\[
\dim_{\C} \ker_{\C} \A[\xi] =r.
\]
\end{defi}
The purpose of this work is to provide a proof of the following two results. In both of these, $\Omega \subset \R^N$ is a bounded domain enjoying an additional regularity assumptions. It is possible to drop the assumption, that $\Omega$ is \emph{connected}; as we then get inequalities on finitely many connected components. For simplicity, however, we stick to the case of domains for the remainder of this work.

\begin{thm}[Korn-type inequality] \label{thm1}
Suppose that $\Omega \subset \R^N$ is a Lipschitz domain. Suppose that $\A$ is a constant-coefficient differential operator of order $k$ satisfying the complex constant rank property and that $A$ also has constant coefficients and order $k$. Let $1<p<\infty$. Then the following two statements are equivalent:
\begin{enumerate}[label=(\alph*)]
    \item \label{thm1:1} There exists a finite dimensional subspace $\Q \subset W^{k,p}(\Omega;\R^d)$ such that 
        \[
            \inf_{v \in \Q} \Vert A(u-v) \Vert_{L^p} \leq C_p \Vert \A u \Vert_{L^p}
        \]
    for all $u \in W^{k,p}(\Omega;\R^d)$;    

    \item \label{thm1:2} for all $\xi \in \C^N \setminus \{0\}$ we have $\ker \A[\xi] \subset \ker A[\xi]$.
\end{enumerate}
 \end{thm}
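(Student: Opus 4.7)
I would prove the two implications separately. The hard content is (b)$\Rightarrow$(a), obtained by combining an algebraic factorization of Fourier symbols, a Mikhlin multiplier estimate on $\R^N$, and a compactness/negative-norm argument on $\Omega$; the direction (a)$\Rightarrow$(b) is a standard oscillation argument at the symbol level.

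\medskip

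\textbf{Proof of (b)$\Rightarrow$(a).} The first step is algebraic: because $\A$ has the complex constant rank property and $\ker_\C \A[\xi]\subset \ker_\C A[\xi]$ holds on $\C^N\setminus\{0\}$, a Moore--Penrose pseudoinverse construction produces a smooth, $0$-homogeneous matrix symbol $M\colon \R^N\setminus\{0\}\to \Lin(\R^l;\R^{l'})$ with
\[
A[\xi] = M(\xi)\,\A[\xi]\qquad\text{for every }\xi\in\R^N\setminus\{0\}.
\]
Mikhlin's theorem then yields the full-space bound $\Vert A u\Vert_{L^p(\R^N)}\leq C\Vert \A u\Vert_{L^p(\R^N)}$ for $u\in C_c^\infty(\R^N;\R^d)$ and $1<p<\infty$. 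The second step transports this estimate to $\Omega$. The complex (and not merely real) constant rank condition is essential here: it forces the space $\Q := \{u\in W^{k,p}(\Omega;\R^d):\A u=0\}$ of $\A$-harmonic maps on the connected Lipschitz domain $\Omega$ to be finite-dimensional, a known rigidity consequence. One then establishes a Necas-type negative-norm inequality
\[
\Vert u\Vert_{W^{k-1,p}(\Omega)} \leq C\bigl(\Vert \A u\Vert_{W^{-1,p}(\Omega)} + \Vert u\Vert_{L^p(\Omega)}\bigr),
\]
via a Bogovskii-type right inverse of $\A$ (whose construction uses the potential $M$ together with the Lipschitz regularity of $\partial\Omega$), and removes the lower-order term modulo $\Q$ through Rellich compactness combined with a Lions-type argument. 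Feeding the full-space multiplier bound through this scheme then yields the claimed inequality $\inf_{v\in\Q}\Vert A(u-v)\Vert_{L^p(\Omega)}\leq C\Vert \A u\Vert_{L^p(\Omega)}$.

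\medskip

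\textbf{Proof of (a)$\Rightarrow$(b).} Let $\xi\in\C^N\setminus\{0\}$ and $w\in\ker_\C\A[\xi]$. Fix a nontrivial $\chi\in C_c^\infty(\Omega)$ and, for $\lambda>0$, set
\[
u_\lambda(x) := \chi(x)\operatorname{Re}\bigl(w\, e^{i\lambda\xi\cdot x}\bigr).
\]
Since $\A\bigl(\operatorname{Re}(w e^{i\lambda\xi\cdot x})\bigr) = 0$, all top-order $\lambda^k$-contributions to $\A u_\lambda$ cancel and the Leibniz rule yields $\Vert \A u_\lambda\Vert_{L^p(\Omega)}=O(\lambda^{k-1})\Vert e^{-\lambda\operatorname{Im}\xi\cdot x}\chi\Vert_{L^p}$, whereas $A u_\lambda$ retains a term of size $\lambda^k|A[\xi]w|$ in the same weighted norm. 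Because $\Q$ is fixed and finite-dimensional, no $v\in\Q$ can match an oscillation of arbitrarily high frequency, so $\inf_{v\in\Q}\Vert A(u_\lambda-v)\Vert_{L^p(\Omega)}$ still grows at the rate $\lambda^k|A[\xi]w|$. Sending $\lambda\to\infty$ in the hypothesis (a) forces $A[\xi]w=0$, giving (b).

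\medskip

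\textbf{Main obstacle.} The technical difficulty is concentrated entirely in the passage from the full-space Mikhlin inequality to the quotient inequality on $\Omega$. Two ingredients must be assembled carefully and must interact properly: the construction of a Bogovskii-type right inverse of $\A$ on $\Omega$ under the complex constant rank assumption (which is strictly weaker than $\C$-ellipticity and for which the standard elliptic toolbox is not available off the shelf), and the verification that $\Q=\ker \A \cap W^{k,p}(\Omega;\R^d)$ is finite-dimensional on an arbitrary connected Lipschitz domain. The symbol factorization and the oscillation test-function argument are, by comparison, routine.
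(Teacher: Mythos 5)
Your proof of the hard direction (b)$\Rightarrow$(a) contains a genuine error and, separately, misses the key ingredient. The error: you define $\Q$ as the full nullspace $\{u\in W^{k,p}(\Omega;\R^d):\A u=0\}$ and claim that the complex constant rank condition forces it to be finite-dimensional. That rigidity statement characterises $\C$-\emph{ellipticity}, not complex constant rank: for $\A=\divergence$ or $\A=\curl$ (both of complex constant rank) the nullspace on a bounded Lipschitz domain is infinite-dimensional, so your candidate $\Q$ is not admissible in statement \ref{thm1:1}. What is true, and what the theorem really needs, is the weaker inclusion $\ker\A\subset\ker A+\Q$ for a finite-dimensional $\Q$, and this is exactly the nontrivial algebraic content that the paper imports from Proposition \ref{thm:NST}: under complex constant rank, condition \ref{thm1:2} is equivalent to a factorisation $D^s\circ A=L\circ\A$ (a Hilbert Nullstellensatz argument over $\C$). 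The paper then concludes in a few lines by combining this with the Ne\v{c}as negative-norm estimate of Lemma \ref{lemma1}: $\Vert Au-P_s(Au)\Vert_{L^p}\leq C\Vert D^s(Au)\Vert_{W^{-s,p}}=C\Vert L(\A u)\Vert_{W^{-s,p}}\leq C\Vert\A u\Vert_{L^p}$, and Lemma \ref{lem:image} converts the polynomial correction into an element of a finite-dimensional $\Q$. Your route instead goes through a real-frequency Mikhlin bound (which only yields the $W^{k,p}_0$ version; the paper explicitly notes that real Fourier modes are insufficient without boundary conditions) and then appeals to a Bogovskii-type right inverse of $\A$ on a Lipschitz domain for a general complex-constant-rank operator; such an inverse is not available off the shelf and you do not construct it, so the transport to $\Omega$ is not established. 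In short, the two pillars your scheme rests on are, respectively, false and unproven.

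The direction (a)$\Rightarrow$(b) also has a gap caused by the cutoff. Since $\xi$ may be genuinely complex, $u_\lambda=\chi\,\mathrm{Re}(w e^{i\lambda\xi\cdot x})$ carries the weight $e^{-\lambda\,\mathrm{Im}\,\xi\cdot x}$, whose $L^p$ mass concentrates near the part of $\spt\chi$ where $\mathrm{Im}\,\xi\cdot x$ is minimal, i.e.\ near $\partial(\spt\chi)$ where $\chi$ is tiny compared to its derivatives. A Laplace-asymptotics check (already in one dimension, with $\chi$ vanishing to infinite order at the endpoint) shows that $\lambda^{k-1}\Vert D\chi\, e^{-\lambda\,\mathrm{Im}\,\xi\cdot x}\Vert_{L^p}$ can be of the \emph{same} order as $\lambda^{k}\Vert \chi\, e^{-\lambda\,\mathrm{Im}\,\xi\cdot x}\Vert_{L^p}$, so the Leibniz ``error'' terms need not be lower order and your claimed growth rate of $\inf_{v\in\Q}\Vert A(u_\lambda-v)\Vert_{L^p}$ does not follow; the finite-dimensionality step is also only asserted. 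The fix is the paper's: statement \ref{thm1:1} imposes no boundary conditions, so no cutoff is needed. Take $u_n(x)=\mathrm{Re}\bigl[v\,e^{2\pi i n\xi\cdot x}\bigr]$ with $v\in\ker\A[\xi]\setminus\ker A[\xi]$; these lie in $W^{k,p}(\Omega;\R^d)$ because $\Omega$ is bounded, satisfy $\A u_n=0$ exactly and $Au\neq 0$ for every nonzero element of their (infinite-dimensional) span, whence \ref{thm1:1} would force $A(\spann\{u_n\})\subset A(\Q)$, a contradiction with finite-dimensionality of $\Q$.
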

Fixing a suitable regularity for $\Omega$ in the setting of the Sobolev type inequality (in particular $p=1$) is more challenging. For now, let us give a concise definition of the required assumption, we comment further on that in Section \ref{remarks}. 

Denote by $M_k = \tbinom{N+k-1}{N}$ and by  $\R^{M_k}$ the Euclidean space spanned up by $e_{\beta}$, $\beta \in \N^n$, $\V \beta \V =k$. Then we define by 
$\divergence^k \colon C^{\infty}(\R^N;(\R^{M_k}) \to C^{\infty}(\R^N)$ the $k$-fold divergence operator;
\[
\divergence^k u = \sum_{\V \beta \V=k} \partial_{\beta} u_{\beta}.
\]
\begin{defi} \label{def:DFED}
We say that $\Omega \subset \R^N$ is a divergence-free-extension domain if there exists a finite dimensional subspace $V_k \subset L^{\infty}(\Omega;\R^{M_k}) \subset L^1(\Omega;\R^{M_k})$, a bounded projection map $P_k \colon L^1(\Omega;\R^{M_k}) \to V_k$ and a bounded linear \textbf{extension operator} 
\[
\mathcal{E} \colon L^1(\Omega;\R^{M_k}) \to L^1(\R^N;\R^{M_k})
\]
such that \begin{enumerate} [label=(\alph*)]
    \item The operator is an extension up to the vector space $V_k$, i.e. 
    \[
    \E u - u = P_k u \quad \text{in } \Omega;
    \]
    \item The operator preserves solenoidality, i.e.
    \[
\divergence^k u =0 \text{ in } \mathcal{D}'(\Omega)
\quad \Longrightarrow \quad \divergence^k \E u =0 \text{ in } \mathcal{D}'(\R^N).
\]
\end{enumerate}
\end{defi}
As mentioned, we comment further on this requirement in Section \ref{remarks}. It is, however, important to stress that a suitable sharp classification of such domains is, to the best of the author's knowledge, an open question and beyond the scope of this article; it is however plausible that at least Lipschitz domains obey the divergence-free extension property (cf. \cite[Open Question 3.4]{GS}).

\begin{thm}[Sobolev-type inequality] \label{thm2}
Let $\Omega$ be a divergence-free-extension-domain. Suppose that $\A$ is a constant-coefficient differential operator of order $k$ satisfying the complex constant rank property and that $A$ also has constant coefficients and order $k-1$. Let $1\leq p<N$ and $p*= \tfrac{Np}{N-p}$. Then the following two statements are equivalent:
\begin{enumerate}[label=(\alph*)]
    \item \label{thm2:1} There exists a finite dimensional subspace $\Q \subset W^{k,p}(\Omega;\R^d)$ such that 
        \[
            \inf_{v \in \Q} \Vert A(u-v) \Vert_{L^{p*}} \leq C_p \Vert \A u \Vert_{L^p}
        \]
    for all $u \in W^{k,p}(\Omega;\R^d)$    
    \item \label{thm2:2} for all $\xi \in \C^N \setminus \{0\}$ we have $\ker \A[\xi] \subset \ker A[\xi]$.
\end{enumerate}
\end{thm}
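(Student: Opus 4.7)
My plan is to treat the two implications separately, with the substantive content residing in \ref{thm2:2}$\Rightarrow$\ref{thm2:1}. The necessary direction \ref{thm2:1}$\Rightarrow$\ref{thm2:2} I would obtain by contraposition, using concentrated Fourier modes. Assuming some $\xi_0 \in \C^N \setminus \{0\}$ and $v_0 \in \ker \A[\xi_0] \setminus \ker A[\xi_0]$ exist, in the case $\xi_0 \in \R^N$ one tests the inequality against wave packets of the form $u_n(x) = \eta(x) v_0 \cos(n \xi_0 \cdot x)$ with $\eta \in C_c^\infty(\Omega)$ a bump function. A Leibniz expansion shows that the leading $n^k$-term in $\A u_n$ cancels thanks to $\A[\xi_0] v_0 = 0$, whereas the leading $n^{k-1}$-term of $A u_n$ carries the non-zero amplitude $A[\xi_0] v_0$. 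Choosing the localization scale of $\eta$ appropriately and invoking Riemann--Lebesgue to argue that $Au_n$ drifts away from any fixed finite-dimensional subspace $A\Q \subset L^{p^*}$, one forces inequality \ref{thm2:1} to fail. The complex case $\xi_0 \notin \R^N$ is handled analogously via exponentially growing modes $e^{i\xi_0 \cdot x}$ localised on small balls, in the spirit of the standard $\C$-ellipticity arguments (cf.~\cite{Kala,DG,GSPoinc}).

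For the main implication, the strategy consists of three steps. Step one is to extract a Fourier-multiplier representation: the complex constant rank property together with the kernel inclusion yields the symbol factorization
\[
A[\xi] \;=\; M[\xi]\,\A[\xi], \qquad M[\xi] := A[\xi]\,\A[\xi]^{+},
\]
where $\A[\xi]^{+}$ is the Moore--Penrose pseudo-inverse. Complex constant rank guarantees that $\A[\xi]^{+}$, and hence $M[\xi]$, is smooth on $\R^N\setminus\{0\}$ and positively homogeneous of degree $-1$. Hence on $\R^N$ the identity $A = M(D) \circ \A$ holds at the level of Fourier multipliers, with $M(D)$ expressible as a zero-order Calder\'on--Zygmund operator composed with the Riesz potential of order one.

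Step two is the whole-space bound $\|Au\|_{L^{p^*}(\R^N)} \leq C \|\A u\|_{L^p(\R^N)}$ on Schwartz functions. In the range $1<p<N$ this follows from Hardy--Littlewood--Sobolev combined with the $L^p$-boundedness of the zero-order factor. The endpoint $p = 1$ carries the real analytical weight of the argument: the Riesz potential does not map $L^1$ into $L^{N/(N-1)}$ on generic inputs, so one exploits instead that $f:=\A u$ automatically satisfies the annihilator relation $\Bop f = 0$, where $\Bop$ is the constant-rank compatibility operator of $\A$. This solenoidal structure is exactly what is needed to invoke a Bourgain--Brezis/van~Schaftingen-type endpoint bound (cf.~\cite{VS,BB1,BB2,GS}), yielding the $p = 1$ case.

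Step three is to descend from $\R^N$ to the bounded domain $\Omega$ via Definition~\ref{def:DFED}. The key encoding is that each component of $\A u$ is naturally a $k$-fold divergence: writing $F^{(j)}_\beta := \sum_i (\A_\beta)_{j,i}\, u_i$, one has $(\A u)_j = \divergence^k F^{(j)}$ with $F^{(j)} \in L^p(\Omega;\R^{M_k})$ depending linearly on $u$. Applying the extension operator $\E$ componentwise provides $\widetilde F^{(j)} \in L^p(\R^N;\R^{M_k})$, and the solenoidality-preservation clause of the divergence-free extension property is precisely what guarantees that the resulting extensions interact correctly with the annihilator structure needed to transport the whole-space estimate of step two to $\Omega$. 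The finite-dimensional discrepancy $\E F^{(j)} - F^{(j)} = P_k F^{(j)} \in V_k$ translates, after restriction, into a finite-dimensional correction that I would absorb into the quotient subspace $\Q \subset W^{k,p}(\Omega;\R^d)$ on the left-hand side. The principal obstacle is the endpoint $p=1$ intertwined with the bounded-domain setting: a Bourgain--Brezis-type inequality must be transported from $\R^N$ to $\Omega$, and the divergence-free extension property is the analytic device engineered precisely to bridge this gap.
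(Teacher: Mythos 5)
There are genuine gaps in both directions. For the necessity direction, your cut-off wave packets $u_n=\eta(x)\,v_0\cos(n\xi_0\cdot x)$ do not produce a contradiction: since $A$ has order $k-1$ while $\A$ has order $k$, the Leibniz terms in which one derivative falls on $\eta$ make $\Vert \A u_n\Vert_{L^p}\sim n^{k-1}$, which is exactly the same growth as $\Vert A u_n\Vert_{L^{p*}}\sim n^{k-1}$; both sides of \ref{thm2:1} are comparable and nothing fails. The localisation is not only unnecessary on a bounded domain, it destroys the argument. The paper instead uses the \emph{exact} exponentials $\mathrm{Re}\,[v_0 e^{2\pi i n\xi_0\cdot x}]$ (also for complex $\xi_0$, which are admissible in $W^{k,p}(\Omega;\R^d)$ precisely because $\Omega$ is bounded): these satisfy $\A u_n\equiv 0$ while the $Au_n$ span an infinite-dimensional space, which is incompatible with a finite-dimensional quotient $\Q$.

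For the sufficiency direction the structural problem is that your scheme only ever uses the \emph{real} symbol factorization $A[\xi]=M[\xi]\A[\xi]$ (Moore--Penrose on $\R^N\setminus\{0\}$), i.e.\ only the real kernel inclusion; since on domains statement \ref{thm2:1} is equivalent to the \emph{complex} inclusion \ref{thm2:2}, no argument that never sees complex frequencies can close. The paper's indispensable input is Proposition \ref{thm:NST} (Hilbert Nullstellensatz), giving a genuine differential-operator identity $D^s\circ A=L\circ\A$, which is then combined with the Ne\v{c}as negative-norm estimate (Lemma \ref{lemma1}, Corollary \ref{coro1}) \emph{on $\Omega$} -- no extension of $u$ is ever performed. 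Concretely, two steps of your plan fail: first, at $p=1$ you invoke a Bourgain--Brezis/van Schaftingen bound solely from the annihilator relation $\B(\A u)=0$, but $\B$ is cocancelling only when $\A$ is cancelling, which is not assumed; when $W=\bigcap_{\xi}\Image\A[\xi]\neq\{0\}$ the van Schaftingen estimate does not apply to arbitrary test directions, and one must show that the relevant multiplier annihilates $W$ -- this is exactly Claim 2 of Lemma \ref{lemma5} (namely $L[\xi]W=0$), proved from the factorization $D^s\circ A=L\circ\A$, and it is absent from your proposal. Second, your descent to $\Omega$ misuses Definition \ref{def:DFED}: the fields $F^{(j)}$ with $\divergence^k F^{(j)}=(\A u)_j$ are \emph{not} solenoidal, so the solenoidality-preserving clause says nothing about them; what your route would actually require is an extension with $\Vert\A(\E u)\Vert_{L^1(\R^N)}\lesssim\Vert\A u\Vert_{L^1(\Omega)}$, which the definition does not provide. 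In the paper the extension is applied only to the genuinely solenoidal auxiliary fields $\B_\beta(\A u)$ inside Lemma \ref{lemma41}, after the problem has been reduced, via Ne\v{c}as duality against $\phi\in C_c^\infty(\Omega)$, to estimating $\int_\Omega \A u\cdot L^*\phi\dx$.
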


\subsection{Organisation of the paper}
We shall now give a brief overview over the remainder of this paper. The following Section \ref{remarks} is concerned with a few remarks regarding the main theorems \ref{thm1} and \ref{thm2} and possible variations. In Section \ref{prelims} we revisit some important results that allow a short proof of both Theorem \ref{thm1} and Theorem \ref{thm2}. The proofs of both main theorems are then carried out in the last Section \ref{proofs}.

\subsection*{Acknowledgements}
The author would like to thank Franz Gmeineder for enlightening discussions. The research of this paper was mainly conducted while the author was supported by the Deutsche Forschungsgemeinschaft (DFG, German Research Foundation) through the graduate school BIGS of the Hausdorff Center for Mathematics (GZ EXC 59 and 2047/1, Projekt-ID 390685813). 
\section{Some comments on the main theorems} \label{remarks}
Before reminding the reader of some auxiliary results, let us briefly comment on the results achieved in Theorems \ref{thm1} and \ref{thm2}. First of all, note that operators like the gradient, higher dimensional gradients, the rotation $\curl$ and the divergence $\divergence$ all satisfy the constant rank property over $\C$. On the contrary, the Laplacian and the Cauchy-Riemann operator, albeit being $\R$-elliptic, do \emph{not} enjoy the complex constant rank condition.
\subsection{Remarks on the Korn-type inequality}
\begin{enumerate}[label=(\alph*)]
    \item As mentioned before, Theorem \ref{thm1} \emph{cannot} hold in both limiting cases $p=1$ and $p=\infty$, unless the entries of the differential operator $A$ can be written as a linear combination of the entries of $\A$, cf. \cite{Ornstein,DLM}. Some modern approaches and proofs of this statement can be found in \cite{CFM,KirchKris,FG}.
    \item As Korn's inequality is not true in \emph{any} open and bounded domain, also Theorem \ref{thm1} cannot be true for domains with irregular boundary. As far as the author is aware, Theorem \ref{thm1} should be true whenever $\Omega$ is a John domain (cf. \cite{Hurri,ADM,DRS} for more details), which in particular includes any domain with Lipschitz boundary (also see Lemma \ref{lemma1}).
    \item For general $\A$ that do not satisfy the complex constant rank property, the statement of Theorem \ref{thm1} is no longer valid; the second condition needs refinement. In particular, taking $A=D^2 \circ \Delta$ and $\A=\Delta^2$ gives a counterexample to '\ref{thm1:2} $\Rightarrow$ \ref{thm1:1}'.
    \item Once having established Theorem \ref{thm1}, we may fully characterise a suitable quotient space $\Q$. Indeed, using elementary functional analytic methods (e.g. uniform convexity of $L^p$-spaces), we can show that the inequality works whenever $\ker \A \subset \ker A + \Q$.
    \item  Theorem \ref{thm1} may be viewed as the suitable replacement of the corresponding theorem for zero boundary values, i.e. for $\A$ with constant rank in $\R$ we have
        \[
            \Vert A u \Vert_{L^p} \leq C_p \Vert \A u \Vert_{L^p}
        \]
    for all $u \in W^{k,p}_0(\Omega;\R^d)$ if and only if $\ker \A[\xi] \subset \ker A[\xi]$. This can be attributed to the fact that the constant rank condition in $\R$ allows the usage of multiplier theorems, cf. \cite{FM,Raita,GuRa,VS}. On the contrary, in our situation without zero boundary values, only considering real Fourier modes is insufficient. Heuristically speaking, it is much more convenient to view the Fourier symbol $\A[\cdot] \in C^{\infty}(\C^N; \Lin(\C^d;\C^l))$ as a polynomial (instead of just being a smooth map), hence $\C$ is a more appropriate field than $\R$. The proof presented here hence also relies on an algebraic lemma proved in \cite{GSPoinc} that is only valid for the algebraically closed field $\C$, but not for $\R$.
\end{enumerate}
\subsection{Remarks on the Sobolev-type inequality}
\begin{enumerate} [label=(\alph*)]
    \item For $p \neq 1$, Theorem \ref{thm2} is an easy consequence of the classical Sobolev embedding $W^{1,q} \hookrightarrow L^{q^{\ast}}$ for $1 < q< N$ combined with Theorem \ref{thm1}. In that case, we actually can easily go to more general domains and do not need the additional regularity assumpiton. Whereas Korn-type inequalities break down for $p=1$, here this limiting case is included (but is not a trivial consequence of Sobolev embeddings).
    \item In the case $A=\nabla^{k-1}$ (with zero boundary values), van Schaftingen showed the validity of the Sobolev-type inequality if and only if $\A$ is elliptic and cancelling. Using the previously mentioned Fourier methods, cf.  \cite{FM,Raita,GuRa}, one may also show the (real) constant rank version of Theorem \ref{thm2} in $W^{k,p}_0(\Omega;\R^d)$ (similar to the proof of Theorem \ref{thm2} below, also cf. \cite{VSG}).
    \item Theorem \ref{thm2} was proven for $A=D^{k-1}$ by Gmeineder \&  Rai\c{t}\u{a} in \cite{GmR} on a ball. Both their proof and the proof of Theorem \ref{thm2} rely on the $L^1$-estimates given by Bourgain \& Brezis \cite{BB1,BB2} and on the related work of van Schaftingen \cite{VS}.
    \item Note that due to the constant rank condition in $\C$, we do not need to worry about van Schaftingen's cancellation condition in our setting. As it was noted in \cite{GmR}, $\C$-ellipticity implies cancellation and ellipticity. For operators of constant rank in $\C$ we do not exactly  have this implication. Nevertheless, we can show that a Bourgain-Brezis-type estimate still holds (cf. Lemma \ref{lemma5}).
    \item Let us finally comment on the required regularity of the domain, cf. Definition \ref{def:DFED}. It is crucial to stress, that the divergence-free extension property is non-trivial for the limiting case $L^1$ and even more challenging than the $L^p$-case, which was studied in \cite{KMPT}. To date, using e.g. methods of reflection, this property basically is clear for balls and cubes (cf. \cite{GmR,GS} and \cite{BVS} for further discussions). It seems to be likely, that this property at least holds for Lipschitz domains (as conjectured in \cite{GS}).
    \item The finite-dimensional vector space $V_k$ in Definition \ref{def:DFED} attributes to \emph{topological} features of the domain $\Omega$. If the domain $\Omega$ is homeomorphic to a ball, then we expect $V_k=\{0\}$. In contrast, for example for an annulus $B_2(0)\setminus B_1(0)$, we may not be able to extend functions of the form $u(x) = x/\vert x \vert^N$ to be divergence-free on the whole space; therefore we have to take a suitable quotient there.
\end{enumerate}
\section{Auxiliary Results} \label{prelims}
Without further ado, let us give a short list of results that are crucial in the proofs Theorem \ref{thm1} and \ref{thm2}. The first one is an estimate due to Ne\v{c}as connecting a function to its derivative in negative Sobolev spaces \cite{Czech,Nevcas}. Denote by $\mathrm{Pol}_{s+1}$ the space of $\R^d$-valued polynomials on $\R^N$ of order $s+1$.
\begin{lem} \label{lemma1}
Let $\Omega \subset \R^N$ be a domain with Lipschitz boundary. Then for all $s \in \N$ and all $1<p<\infty$, there is a constant $C(s,p)>0$ and bounded linear map $P_s \colon L^p(\Omega;\R^d) \to \mathrm{Pol}_{s+1}$, such that 
    \begin{equation}
        \Vert u -P_s u \Vert_{L^p} \leq C \Vert D^s u \Vert_{W^{-s,p}} \quad \forall u \in L^p(\Omega;\R^d).
    \end{equation}
\end{lem}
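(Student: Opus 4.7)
The plan is to combine the classical Ne\v{c}as negative-norm inequality with a finite-dimensional projection and a Peetre-type compactness step. Concretely, the first stage is to establish the auxiliary estimate
\[
\Vert u\Vert_{L^p(\Omega)} \leq C\bigl(\Vert u\Vert_{W^{-s,p}(\Omega)} + \Vert D^s u\Vert_{W^{-s,p}(\Omega)}\bigr), \quad u \in L^p(\Omega;\R^d).
\]
For $s=1$ this is precisely the statement proven in \cite{Czech,Nevcas}: one covers $\partial\Omega$ by finitely many Lipschitz charts, uses a subordinate partition of unity, flattens each boundary piece to a half-space, and on a half-ball obtains the estimate by duality with a Bogovskii-type right inverse of the divergence (alternatively by Fourier/Riesz-transform methods). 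The case $s>1$ follows by iterating the $s=1$ version on successive derivatives of $u$, or by rerunning the same localization procedure one order higher.

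Second, I would construct the projection $P_s$. Let $\Pi \subset \mathrm{Pol}_{s+1}$ denote the finite-dimensional space of polynomials annihilated by $D^s$ (the polynomials of degree at most $s-1$). Choose a basis $\{\pi_1,\dots,\pi_m\}$ of $\Pi$ and test functions $\varphi_1,\dots,\varphi_m \in C_c^{\infty}(\Omega;\R^d)$ biorthogonal to it, i.e.\ $\int_{\Omega} \pi_i\cdot\varphi_j \dx = \delta_{ij}$, and set $P_s u := \sum_{i=1}^m \ell_i(u)\,\pi_i$, where $\ell_i(u) := \int_\Omega u\cdot\varphi_i \dx$. Then $P_s \colon L^p(\Omega;\R^d) \to \Pi$ is bounded and linear, and every functional $\ell_i$ vanishes on $u - P_s u$. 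Applying the Stage 1 estimate to $u - P_s u$ and using $D^s P_s u = 0$ yields
\[
\Vert u - P_s u\Vert_{L^p} \leq C\bigl(\Vert u - P_s u\Vert_{W^{-s,p}} + \Vert D^s u\Vert_{W^{-s,p}}\bigr).
\]
To absorb the first term on the right, I would argue by contradiction: were the claimed inequality false, there would exist $(u_n) \subset L^p(\Omega;\R^d)$ with $\ell_i(u_n)=0$ for all $i$, $\Vert u_n\Vert_{L^p}=1$, and $\Vert D^s u_n\Vert_{W^{-s,p}}\to 0$. Since $L^p(\Omega)$ embeds compactly into $W^{-s,p}(\Omega)$ on a bounded Lipschitz domain (Rellich), a subsequence is Cauchy in $W^{-s,p}$; the Stage 1 estimate applied to differences then upgrades it to Cauchy in $L^p$. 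The limit $u$ satisfies $D^s u=0$, $\ell_i(u)=0$ and $\Vert u\Vert_{L^p}=1$; the first condition forces $u\in\Pi$, and the biorthogonality conditions then force $u=0$, a contradiction.

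The main obstacle is really Stage 1: the negative-norm Ne\v{c}as inequality on Lipschitz domains is the only genuinely non-trivial ingredient, and it is precisely where the Lipschitz regularity of $\partial\Omega$ enters (the inequality is known to fail on sufficiently irregular domains). Once it is in hand, the remaining ingredients, namely biorthogonal polynomial projection and Rellich compactness, are routine; since the author explicitly cites \cite{Czech,Nevcas}, in practice one invokes the negative-norm inequality as a black box and extracts Lemma \ref{lemma1} from it by the two-step procedure above.
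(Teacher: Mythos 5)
The paper does not prove Lemma \ref{lemma1} at all: it is invoked as a known result, with the negative-norm (Ne\v{c}as/Lions) inequality and its projected form taken from the cited references \cite{Czech,Nevcas}. Your proposal therefore goes beyond the paper by actually reconstructing the standard proof, and the reconstruction is correct: the classical Ne\v{c}as estimate supplies the inequality with the extra term $\Vert u\Vert_{W^{-s,p}}$, the kernel of $D^s$ on a connected open set consists exactly of polynomials of degree at most $s-1$ (a subspace of $\mathrm{Pol}_{s+1}$, so your $P_s$ is admissible and even sharper than required), the biorthogonal functionals $\ell_i$ give a bounded linear projection with $\ell_i(u-P_su)=0$ and $D^sP_su=0$, and the Peetre--Tartar contradiction argument with the compact embedding $L^p(\Omega)\hookrightarrow W^{-s,p}(\Omega)$ (valid since $\Omega$ is bounded and Lipschitz) absorbs the lower-order term; this is exactly how the projected estimate is deduced in the literature the author cites. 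The only point to be careful about is your first alternative for Stage 1 when $s>1$: you cannot literally ``iterate the $s=1$ version on successive derivatives of $u$,'' because $Du$ is then only a distribution in $W^{-1,p}$, not an $L^p$ function, so you need the Ne\v{c}as-type statement at negative order, namely that $v\in W^{-m,p}$ together with $\nabla v\in W^{-m,p}$ implies $v\in W^{-m+1,p}$ with the corresponding estimate on bounded Lipschitz domains; this generalized form is precisely what is proved in the cited references (and is what your second alternative, rerunning the localization one order higher, amounts to), so the gap is one of phrasing rather than substance.
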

The regularity of the boundary might be weakened (cf. \cite{ADM}), but above lemma does \emph{not} hold for any domain. It is also worth mentioning, that this estimate fails both for $p=1$ and for $p=\infty$.

The second important ingredient is the following algebraic lemma formulated in \cite[Theorem 3.2]{GSPoinc}, which directly uses the constant rank property over the algebraically closed field $\C$. Its proof crucially relies on Hilbert's Nullstellensatz (which is the abstract reason why the same argument fails for constant rank operators over $\R$). 

\begin{prop} \label{thm:NST}
Let $\A \colon C^{\infty}(\R^N,\R^d) \to C^{\infty}(\R^N,\R^l)$, $A \colon C^{\infty}(\R^N,\R^d) \to C^{\infty}(\R^N)$ be two constant-coefficient homogeneous differential operators. Suppose that $\A$ satisfies the constant rank property over $\C$. Then the following statements are equivalent:
\begin{enumerate} [label=(\alph*)]
\item \label{NST:1} there exists $s \in \N$ and a differential operator $L \colon C^{\infty}(\R^N,\R^l) \to C^{\infty}(\R^N,(\R^N)^{\otimes s})$, homogeneous of order $s$, such that \[
D^s \circ A = L \circ \A;
\]
\item \label{NST:2} $\ker_\C \A [\xi] \subset \ker_\C A[\xi]$ for all $\xi \in \C^N \setminus \{0\}$,
\item \label{NST:3} there is a finite dimensional subspace $\Q$, such that $\ker \A \subset \ker A + \Q$ (as subsets of $L^1(\Omega;\R^d)$ for any bounded Lipschitz domain $\Omega$).
\end{enumerate}
\end{prop}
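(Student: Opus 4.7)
The plan is to establish the cycle $\text{(a)} \Rightarrow \text{(c)} \Rightarrow \text{(b)} \Rightarrow \text{(a)}$, since the last implication is the genuine content and rests on Hilbert's Nullstellensatz. The implication $\text{(a)} \Rightarrow \text{(b)}$ is immediate on passing to Fourier symbols: the identity $D^{s}\circ A = L \circ \A$ reads $\xi^{\otimes s} \otimes A[\xi] = L[\xi]\A[\xi]$ as polynomials in $\xi$, and since $\xi^{\otimes s} \neq 0$ for $\xi \neq 0$, any $v \in \ker_{\C}\A[\xi]$ automatically satisfies $A[\xi]v = 0$. For $\text{(a)} \Rightarrow \text{(c)}$, if $u \in \ker \A$, then $D^{s}(Au) = L(\A u) = 0$ in $\mathcal{D}'(\Omega)$, so on the connected $\Omega$ the distribution $Au$ agrees with a polynomial of degree less than $s$; hence $A(\ker \A)$ lies in a finite-dimensional space, and picking a finite-dimensional $\Q \subset \ker \A$ that $A$ maps bijectively onto $A(\ker \A)$ yields $\ker \A \subset \ker A + \Q$.

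For $\text{(c)} \Rightarrow \text{(b)}$, I would use a plane-wave argument. Suppose for contradiction that there exist $\xi^{*} \in \C^{N}\setminus \{0\}$ and $v^{*} \in \ker_{\C}\A[\xi^{*}]$ with $A[\xi^{*}]v^{*} \neq 0$. The complex constant rank hypothesis guarantees a smooth local family $\xi \mapsto v_{\xi} \in \ker_{\C}\A[\xi]$ extending $v^{*}$, and by continuity $A[\xi]v_{\xi} \neq 0$ on a neighbourhood of $\xi^{*}$. Since $\Omega$ is bounded, the real-valued plane-wave functions $u_{\xi}(x):=\Re(v_{\xi}e^{i\xi\cdot x})$ and their imaginary-part counterparts lie in $L^{\infty}(\Omega;\R^{d}) \subset L^{1}(\Omega;\R^{d})$ and in $\ker \A$. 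Condition (c) forces each $A u_{\xi}$ into the finite-dimensional space $A(\Q)$; however $Au_{\xi}$ is a nonzero scalar multiple of the real part of $e^{i\xi\cdot x}$, and plane waves with distinct frequencies are linearly independent on the open set $\Omega$, which produces a contradiction once sufficiently many distinct $\xi$ are selected.

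The substance of the proof is $\text{(b)} \Rightarrow \text{(a)}$. Regard $\A[\xi]$ and $A[\xi]$ as matrices over the Noetherian ring $R := \C[\xi_{1},\dots,\xi_{N}]$. Over the fraction field $\C(\xi)$, the inclusion $\ker \A[\xi] \subset \ker A[\xi]$ yields by linear algebra a rational row vector $M(\xi)$ with $A[\xi]=M(\xi)\A[\xi]$; Cramer's rule expresses $M$ with common denominator equal to a nonzero $(d-r)\times(d-r)$ minor of $\A[\xi]$. Clearing denominators produces $q(\xi)A[\xi] = P(\xi)\A[\xi]$ for polynomial $q$ and $P$. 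The complex constant rank property means precisely that the ideal $J \subset R$ generated by all $(d-r)\times(d-r)$ minors of $\A[\xi]$ has vanishing locus $V(J) = \{0\}$, whence Hilbert's Nullstellensatz delivers $\sqrt{J} = (\xi_{1},\dots,\xi_{N})$. Consequently there exists $s \in \N$ such that every monomial $\xi^{\alpha}$ of degree $s$ lies in $J$; writing each such $\xi^{\alpha}$ as a polynomial combination of minors and substituting into the corresponding Cramer-type identities produces a single polynomial identity $\xi^{\otimes s}A[\xi] = L[\xi]\A[\xi]$, which is (a).

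The principal obstacle I expect is the passage from the many ``local'' Cramer-type rational expressions for $A[\xi]$, each valid on an affine open set where a particular minor is nonzero, to a single polynomial identity valid on all of $\C^{N}$. This is precisely where Nullstellensatz is invoked, and where algebraic closure of $\C$ is essential: over $\R$ the minors of $\A[\xi]$ could share common real zeros strictly larger than $\{0\}$, so that the real vanishing locus of $J$ would be too big and the Nullstellensatz step would fail. This reflects why the \emph{complex} constant rank property, rather than its real counterpart, is the correct algebraic assumption for this factorisation theorem.
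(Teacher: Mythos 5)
Your proposal is sound, but note that the paper does not actually prove Proposition \ref{thm:NST}: it is quoted from \cite{GSPoinc} with only the remark that the proof rests on Hilbert's Nullstellensatz, which is precisely the engine of your implication \ref{NST:2} $\Rightarrow$ \ref{NST:1} (Cramer-type identities $m_{I,J}(\xi)A[\xi]=P_{I,J}(\xi)\A[\xi]$ for the nonvanishing $(d-r)\times(d-r)$ minors, plus the fact that the complex constant rank condition forces the minor ideal to have zero locus $\{0\}$, so all monomials of some degree $s$ lie in it). Your \ref{NST:3} $\Rightarrow$ \ref{NST:2} is the same plane-wave argument the paper itself runs for `\ref{thm1:1} $\Rightarrow$ \ref{thm1:2}' in Theorem \ref{thm1}, and \ref{NST:1} $\Rightarrow$ \ref{NST:3} via $D^s(Au)=0$ forcing $Au$ to be a polynomial of degree $<s$ is correct. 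Two points deserve one more line each, though neither is a genuine gap: first, the pointwise inclusion $\ker_{\C}\A[\xi]\subset\ker_{\C}A[\xi]$ for $\xi\neq 0$ must be upgraded to an inclusion of kernels over the function field $\C(\xi)$ before linear algebra over $\C(\xi)$ gives $A=M\A$ (clear denominators and use that a polynomial vanishing on $\C^N\setminus\{0\}$ vanishes identically); second, in \ref{NST:3} $\Rightarrow$ \ref{NST:2} the functions $Au_{\xi}$ can degenerate (for purely imaginary $\xi$ the real part may vanish even though $A[\xi]v_{\xi}\neq 0$), so one should use both real and imaginary parts as you indicate, or more simply fix one $\xi^{*}$ and take the scaled frequencies $n\xi^{*}$, $n\in\N$, as the paper does in Theorem \ref{thm1}, which also avoids any smooth selection of $v_{\xi}$. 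Finally, to match the statement one should observe that the constructed $L$ can be taken homogeneous (extract the homogeneous component of the correct degree) and with real coefficients (restrict the polynomial identity to real $\xi$ and take real parts).
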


In contrast to both Lemma \ref{lemma1} and Proposition \ref{thm:NST} the following statement is only a technical detail we need in the proofs of Theorem \ref{thm1} and Theorem \ref{thm2}.

\begin{lem} \label{lem:image}
Let $A \colon C^{\infty}(\R^N,\R^d) \to C^{\infty}(\R^N,\R^e)$ be a differential operator of order $k$ and $\pi$ be a polynomial with degree $\deg(\pi)$. If there exists $u \in W^{k,p}(\Omega,\R^N)$ with $A u= \pi$, there is a polynomial $\Pi$ with degree $\deg(\pi)+k$, such that $A \Pi = \pi$.
\end{lem}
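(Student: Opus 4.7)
The plan is to construct $\Pi$ as the Taylor polynomial of a mollified version of $u$ at an interior point of $\Omega$, exploiting the fact that a homogeneous differential operator of order $k$ sends the Taylor polynomial of degree $m+k$ of a smooth function to the Taylor polynomial of degree $m$ of its image. Combined with a short induction on $m := \deg \pi$, this produces the desired polynomial antiderivative.

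First I would fix an interior point $x_0 \in \Omega$ and $\epsilon>0$ with $B_\epsilon(x_0) \subset \Omega$, and let $\phi_\epsilon \in C_c^{\infty}(B_\epsilon(0))$ be a standard mollifier with $\int \phi_\epsilon = 1$. Then $u_\epsilon := u * \phi_\epsilon \in C^{\infty}$ is defined in a neighbourhood of $x_0$ and satisfies $A u_\epsilon = \pi * \phi_\epsilon$ there, since $A$ is a constant-coefficient operator and thus commutes with convolution by a smooth compactly supported function. A direct Taylor expansion shows that for any polynomial $\pi$ of degree $m$,
\[
\pi * \phi_\epsilon \;=\; \pi + q, \qquad \deg q \leq m-1,
\]
because the leading term is preserved (the zeroth moment of $\phi_\epsilon$ is $1$) while the higher moments only produce strictly lower-order corrections $q$.

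Next, let $\Pi_0$ be the Taylor polynomial of $u_\epsilon$ at $x_0$ of degree $m+k$. Since $A$ is homogeneous of order $k$, $A\Pi_0$ coincides with the Taylor polynomial of $A u_\epsilon = \pi*\phi_\epsilon$ of degree $m$, which equals $\pi*\phi_\epsilon = \pi + q$ in full (the right-hand side is already a polynomial of degree $\leq m$). Hence $v := u - \Pi_0 \in W^{k,p}(\Omega;\R^d)$ satisfies $Av = -q$, an equation of the same type but with right-hand side of strictly smaller degree. I would then induct on $m$: the base case $m=0$ is immediate, because $\pi*\phi_\epsilon = \pi$ and $\Pi := \Pi_0 \in \mathrm{Pol}_k$ already satisfies $A\Pi = \pi$; in the inductive step, the hypothesis applied to $Av = -q$ furnishes a polynomial $\Pi_1$ of degree at most $(m-1)+k$ with $A\Pi_1 = -q$, and then $\Pi := \Pi_0 + \Pi_1$ has degree at most $m+k$ and satisfies $A\Pi = \pi$.

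The only subtle point — the main obstacle — is that standard mollification perturbs the lower-order coefficients of $\pi$, so the naive Taylor argument produces $A\Pi_0 = \pi * \phi_\epsilon$ rather than $\pi$; the induction above is the cleanest way to correct this, and uses only that the hypothesis is stable under the substitution $u \leadsto u - \Pi_0$. As a side remark, one could instead bypass the induction altogether by choosing a signed smooth mollifier with vanishing moments $\int y^\alpha \phi_\epsilon(y)\,\mathrm{d}y = 0$ for $1 \leq |\alpha| \leq m$, which yields $\pi * \phi_\epsilon = \pi$ exactly and makes $\Pi := \Pi_0$ work in one step.
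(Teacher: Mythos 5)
Your proof is correct, but it takes a genuinely different route from the paper. The paper argues algebraically: it first treats constant right-hand sides, observing that $A u=\pi$ forces $\pi\in\spann_{\vert\alpha\vert=k}\Image A_{\alpha}$ pointwise and then writing down the explicit monomial preimage $\sum_{\vert\alpha\vert=k}\tfrac{1}{\alpha!}x^{\alpha}v_{\alpha}$; for general $\pi$ it decomposes into homogeneous components $\pi_0+\dots+\pi_s$ and works from the top degree down, using that $D^s\pi_s=(D^s\circ A)u$ is constant to produce a homogeneous $\Pi_s$ with $A\Pi_s=\pi_s$, and then inducting. You instead mollify $u$ near an interior point, use that $A$ commutes with convolution so that $Au_\epsilon=\pi\ast\phi_\epsilon=\pi+q$ with $\deg q\leq \deg\pi-1$, take the degree-$(\deg\pi+k)$ Taylor polynomial $\Pi_0$ of $u_\epsilon$ (so that $A\Pi_0=\pi+q$ by homogeneity of $A$ and the compatibility of Taylor truncation with differentiation), and correct the lower-order error $q$ by induction on the degree, applied to $u-\Pi_0$. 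Your analytic argument has the small advantage of never applying a higher-order operator such as $D^s\circ A$ to the merely $W^{k,p}$-regular $u$ (the paper's step $D^s\pi_s=(D^sA)u$ is only distributional and would itself be cleanest after mollification), whereas the paper's argument is purely linear-algebraic and exhibits the preimage polynomials explicitly. Two minor points to note in your write-up: the induction step should be phrased as strong induction (the error $q$ may have any degree $\leq \deg\pi-1$), and the step $u-\Pi_0\in W^{k,p}(\Omega;\R^d)$ uses boundedness of $\Omega$, which holds in the paper's setting; both are cosmetic.
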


\begin{proof}
First of all, we see that this is true if $\pi$ is constant. Indeed, the first property can only hold true if 
    \[
        \pi \in \spann_{\V \alpha \V=k} \Image A_{\alpha}
    \]
due to the definition of the differential operator $A$. Consequently, $\pi = \sum_{\V \alpha \V =k} A_{\alpha} v_{\alpha}$ for some appropriate $v_{\alpha} \in \R^d$. But then one obtains 
    \[
     \pi = A \left( \sum_{\V \alpha \V=k} \frac{1}{\alpha!} x^{\alpha} v_{\alpha} \right).
    \]
If $\pi$ has higher order, one writes $\pi= \pi_0+\pi_1+\ldots+\pi_s$ for $i$-homogeneous polynomials $\pi_i$. We then show that any homogeneous component can be written as $\pi_i= A \Pi_i$. Starting with $i=s$, note that $D^s \pi_s$ is constant and can be written as $D^s \pi_s = (D^s A) u$. Consequently, there is a homogeneous polynomial $\Pi_s$ of order $(s+k)$ such that $D^s \pi_s= (D^s \circ A) \Pi_s$ and hence also $\pi_s = A \Pi_s$. Arguing by an inductive argument one then obtains polynomials $\Pi_i$ with $\pi_i = A \Pi_i$, finishing the proof.
    
\end{proof}

The last important result is the connection of cancelling operators to an Bourgain-Brezis type estimate due to van Schaftingen \cite[Theorem 4]{VS_old}. In below Lemma \ref{lemma5} we prove an appropriate adjustment of this result.
Let us first recall van Schaftingen's result.
\begin{lem} \label{lemma4}
 Then for all $v \in L^1(\R^N;\R^{M_k})$ obeying $\divergence^k v =0$ and all $\phi \in C_c^{\infty}(\R^N;\R^{M_k})$ we have 
    \begin{equation} \label{VSBB}
        \left \vert \int_{\Omega} v \cdot \phi \dx \right \vert \leq \Vert v \Vert_{L^1} \Vert D \phi \Vert_{L^N}.
    \end{equation}
\end{lem}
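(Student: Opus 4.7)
The plan is to use the constraint $\divergence^k v = 0$ to test $v$ against $k$-th derivatives of scalars, which pair to zero, thereby reducing the bound on $|\int v\cdot\phi|$ to a control of $\phi$ modulo such derivatives.

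First I would reduce by mollification to the case $v \in C_c^\infty(\R^N;\R^{M_k})$ with $\divergence^k v = 0$ pointwise, which is legitimate since convolution commutes with differentiation and so preserves solenoidality. For any scalar $\psi \in C_c^\infty(\R^N)$ the identity
\[
\int_{\R^N} v\cdot D^k\psi \dx = (-1)^k \int_{\R^N} (\divergence^k v)\,\psi \dx = 0
\]
holds, so I may replace $\phi$ by $\phi - D^k\psi$ in $\int v\cdot\phi \dx$ without altering the value. The task therefore reduces to constructing, for each $\phi \in C_c^\infty(\R^N;\R^{M_k})$, a scalar $\psi$ such that $\phi - D^k\psi$ pairs against arbitrary $L^1$ functions with a bound proportional to $\Vert D\phi \Vert_{L^N}$.

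This optimal approximation of $\phi$ by $D^k\psi$ is exactly the content of van Schaftingen's \cite[Theorem 4]{VS_old}, which builds on the Bourgain-Brezis estimates \cite{BB1,BB2}. Abstractly the construction works because the operator $\psi \mapsto D^k\psi$ is elliptic from scalars to $\R^{M_k}$: its Fourier symbol $((\mathrm{i}\xi)^\beta)_{|\beta|=k}$ is injective for every $\xi \in \R^N \setminus \{0\}$, which is precisely the dual formulation of $\divergence^k$ being a cancelling operator in van Schaftingen's sense. The main obstacle for a fully self-contained proof would be the Littlewood-Paley construction of $\psi$: one localises $\phi$ in frequency as $\phi = \sum_j \phi_j$, inverts $D^k$ on each dyadic annulus to produce $\psi_j$, and sums; the critical exponent $L^N$ on the right-hand side emerges precisely from the Bernstein inequality applied shell by shell, which is the delicate point. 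I would not reproduce this technical construction here but appeal directly to \cite{VS_old}. Finally, I note that $\int_\Omega$ in the displayed inequality should presumably read $\int_{\R^N}$, since $v$ and $\phi$ are defined on all of $\R^N$; the two integrals agree whenever $\spt \phi \subset \Omega$.
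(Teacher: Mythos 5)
Your proposal is correct and takes essentially the same route as the paper: the paper states this lemma without proof as a quoted result of van Schaftingen (\cite[Theorem 4]{VS_old}), and your argument ultimately rests on that same citation, with the mollification step, the duality reduction via $\int v\cdot D^k\psi\dx=0$, and the remark that $\int_\Omega$ should read $\int_{\R^N}$ all being fine. One small terminological caveat: the relevant property of $\divergence^k$ is that it is \emph{cocancelling}, i.e.\ $\bigcap_{\xi\neq 0}\ker \divergence^k[\xi]=\{0\}$ (which follows from the linear independence of the monomials $\xi^\beta$, $\vert\beta\vert=k$), not the pointwise injectivity (ellipticity) of the symbol of $\psi\mapsto D^k\psi$; this does not affect your argument since you appeal to the quoted estimate anyway.
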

Following \cite[Proposition 3.2]{GS}, it is possible to manipulate that statement to hold for domains that allow a solenoidal extension in $L^1$ (cf. Definition \ref{def:DFED}).
 
\begin{lem} \label{lemma41}
Let $\Omega \subset \R^N$ be a bounded divergence-free extension domain, $k \in \N$. Then for all $v \in L^1(\Omega;\R^{M_k})$ obeying $\divergence^k v =0$ and all $\phi \in C_c^{\infty}(\Omega;\R^{M_k})$ we have 
    \begin{equation} \label{VSBB2}
        \left \vert \int_{\Omega} v \cdot \phi \dx \right \vert \leq \Vert v \Vert_{L^1} \Vert D \phi \Vert_{L^N}.
    \end{equation}
\end{lem}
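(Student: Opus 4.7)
The plan is to reduce Lemma \ref{lemma41} to Lemma \ref{lemma4} by invoking the divergence-free extension operator $\E$ furnished by Definition \ref{def:DFED}. I would start with $v \in L^1(\Omega;\R^{M_k})$ satisfying $\divergence^k v = 0$ and $\phi \in C_c^{\infty}(\Omega;\R^{M_k})$ as in the statement. Applying $\E$ produces $\E v \in L^1(\R^N;\R^{M_k})$ which is globally solenoidal thanks to the second property of Definition \ref{def:DFED}, while $\phi$ extends by zero to an element of $C_c^{\infty}(\R^N;\R^{M_k})$, still denoted by $\phi$.

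Exploiting the identity $\E v - v = P_k v$ on $\Omega$ together with $\spt \phi \subset \Omega$, I split
\[
\int_{\Omega} v \cdot \phi \dx = \int_{\R^N} \E v \cdot \phi \dx - \int_{\Omega} P_k v \cdot \phi \dx.
\]
Lemma \ref{lemma4} applies directly to the first integral and, combined with the boundedness of $\E \colon L^1(\Omega) \to L^1(\R^N)$, yields
\[
\left\vert \int_{\R^N} \E v \cdot \phi \dx \right\vert \leq \Vert \E v \Vert_{L^1(\R^N)} \Vert D\phi \Vert_{L^N(\R^N)} \leq C \Vert v \Vert_{L^1(\Omega)} \Vert D\phi \Vert_{L^N(\Omega)}.
\]

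For the correction term, I would use that $V_k$ is a finite-dimensional subspace of $L^{\infty}(\Omega;\R^{M_k})$; boundedness of $P_k \colon L^1(\Omega) \to V_k$ together with equivalence of norms on $V_k$ then gives $\Vert P_k v \Vert_{L^{\infty}} \leq C \Vert v \Vert_{L^1}$. Since $\phi$ has compact support in the bounded set $\Omega$, Poincaré's inequality furnishes $\Vert \phi \Vert_{L^1(\Omega)} \leq C(\Omega) \Vert D \phi \Vert_{L^N(\Omega)}$, so that
\[
\left\vert \int_{\Omega} P_k v \cdot \phi \dx \right\vert \leq \Vert P_k v \Vert_{L^{\infty}} \Vert \phi \Vert_{L^1(\Omega)} \leq C \Vert v \Vert_{L^1(\Omega)} \Vert D \phi \Vert_{L^N(\Omega)}.
\]
Summing the two contributions produces the desired estimate \eqref{VSBB2}, the actual multiplicative constant depending on $\Omega$ and on the extension operator (the literal $1$ displayed in \eqref{VSBB2} being a harmless abuse of notation inherited from \eqref{VSBB}).

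The main obstacle is essentially bookkeeping: verifying that the two defining properties of $\E$ are exactly the right ingredients to reduce the domain estimate to its full-space counterpart without damaging the $L^1$--$L^N$ scaling on the right-hand side. The delicate analytic content---the Bourgain--Brezis inequality---is already carried by Lemma \ref{lemma4}, so no new harmonic-analytic input beyond the finite-dimensional correction and Poincaré's inequality is required.
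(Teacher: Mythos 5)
Your proposal is correct and follows essentially the same route as the paper: decompose $v = \E v - P_k v$ using the divergence-free extension operator, apply van Schaftingen's full-space estimate (Lemma \ref{lemma4}) to $\E v$ together with boundedness of $\E$, and control the finite-dimensional correction term by equivalence of norms on $V_k$ plus Poincar\'e's inequality for the compactly supported test function. The only (immaterial) difference is the choice of H\"older exponents in the correction term ($L^\infty$--$L^1$ versus the paper's $L^{N/(N-1)}$--$L^N$), and your remark that the resulting constant depends on $\Omega$ and $\E$ matches what the paper's argument implicitly produces.
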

\begin{proof}
Due to $\Omega$ obeying the properties of Definition \ref{def:DFED}, we may find $\E v \in L^1(\R^N;\R^{M_k})$ that satisfies $\divergence^k (\E v) =0$ and $\E v - v = P_V v$ on $\Omega$. Consequently,
\[
\left \vert \int_{\Omega} v \phi \dx \right \vert \leq \left \vert \int_{\Omega} P_V v \phi \dx \right \vert + \left \vert \int_{\R^N} \E v \phi \dx \right \vert .
\]
The first term now may be estimated by using that $V$ is a finite dimensional subspace and Poincar\'{e}'s inequality, i.e. 
\[
\left \vert \int_{\Omega} P_V v \phi \dx \right \vert  \leq \Vert P_V v \Vert_{L^{N/(N-1)}} \Vert \phi \Vert_{L^N} \leq \Vert P_V v \Vert_{L^1} \Vert D \phi \Vert_{L^N}.
\]
For the second term we now may use the full-space estimate: 
\[
\left \vert \int_{\R^N} \E v \phi \dx \right \vert \leq \Vert \E v \Vert_{L^1} \Vert D \phi \Vert_{L^N}.
\]
Using boundedness of $P_V$ and of the extension operator $\E$ then yields the desired estimate \eqref{VSBB2}.
\end{proof}
The modification of Lemma \ref{lemma41} allows for linear operators that are possibly non-cancelling; but for that we need to restrict to a certain class of test-functions $\phi$.
\begin{lem} \label{lemma5}
Let $\Omega \subset \R^N$ be a divergence-free extension domain. Suppose that $\A$ satisfies the constant rank property over $\R$ and has order $k$. Moreover, suppose that 
    \[A \colon C^{\infty}(\R^N;\R^d) \to C^{\infty}(\R^N)
    \]
    is a constant coefficient operator of order $k-1$ and that there is $s \in \N_{\geq 1}$ and an operator 
        \[
        L \colon C^{\infty}(\R^N;\R^l) \to C^{\infty}(\R^N;(\R^N)^{\otimes s})
        \]
        of order $(s-1)$ such that 
    \[
        D^s \circ A = L \circ \A.
    \]
Then for all $u \in W^{k,1}(\Omega;\R^d)$ and all $\phi \in C_c^{\infty}(\Omega;(\R^N)^{\otimes s})$ we have     \[
        \left \vert \int_{\Omega} \A u \cdot L^{\ast} \phi \dx \right \vert \leq \Vert \A u \Vert_{L^1} \Vert D \circ L^{\ast} \phi \Vert_{L^N}.
    \]
\end{lem}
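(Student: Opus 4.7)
The plan is to use the given algebraic identity $D^s \circ A = L \circ \A$ together with iterated integration by parts to rewrite $I := \int_\Omega \A u \cdot L^* \phi \dx$ as a pairing of the scalar function $Au \in W^{1,1}(\Omega)$ against a compactly supported test function, and then to conclude by the Sobolev--Poincar\'e inequality on $\Omega$. By a standard density argument (using the divergence-free extension property to justify good approximation), it suffices to treat $u \in C^{\infty}(\bar\Omega;\R^d)$; the compact support of $\phi$ ensures all boundary terms in the subsequent integrations by parts vanish.

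Applying the hypothesis and then transferring $s-1$ derivatives off $D^{s-1}$ and onto $\phi$,
\[
I = \int_\Omega L\A u \cdot \phi \dx = \int_\Omega D^s(Au)\cdot\phi \dx = (-1)^{s-1}\int_\Omega \nabla(Au)\cdot\tilde\phi \dx,
\]
where $\tilde\phi \in C_c^\infty(\Omega;\R^N)$ is the explicit $(s-1)$-th order partial divergence of $\phi$ obtained by contracting all but one of the tensor indices against derivatives. One more integration by parts, combined with the observation that $\divergence\tilde\phi$ has zero integral (compact support), allows me to replace $Au$ by $Au - c$ for the average $c$ of $Au$. H\"older's inequality with exponents $N/(N-1)$ and $N$ together with Sobolev--Poincar\'e on the Lipschitz-like domain $\Omega$ then yield
\[
|I| \leq \Vert Au - c \Vert_{L^{N/(N-1)}}\Vert \divergence\tilde\phi \Vert_{L^N} \leq C \Vert \nabla(Au) \Vert_{L^1}\Vert D\tilde\phi \Vert_{L^N}.
\]

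To close the inequality it remains to verify $\Vert\nabla(Au)\Vert_{L^1} \leq C\Vert\A u\Vert_{L^1}$ and $\Vert D\tilde\phi\Vert_{L^N} \leq C\Vert DL^*\phi\Vert_{L^N}$. When $s=1$ both are immediate: $L$ has order zero, $\nabla(Au) = L\A u$ delivers the first, and the symbol identity $L\A[\xi] = i\xi A[\xi]$ forces $L\colon \R^l \to \R^N$ to be surjective as $\xi$ ranges over $\R^N$, so $L^T$ is injective and thus $\Vert D\tilde\phi\Vert_{L^N} = \Vert D\phi\Vert_{L^N} \leq C\Vert L^T D\phi\Vert_{L^N} = C\Vert DL^*\phi\Vert_{L^N}$. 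The main obstacle is the general case $s\geq 2$, where the hypothesis only controls the full $s$-th derivative $D^s(Au) = L\A u$ in $L^1$ and hence does not directly give an $L^1$-bound on the first-order derivative of $Au$. This gap will be bridged by applying the solenoidal Bourgain--Brezis estimate of Lemma \ref{lemma41} to an auxiliary divergence-free field manufactured from $\A u$ via the annihilator of $\A$ (available because $\A$ has constant rank over $\R$), while the matching bound for $D\tilde\phi$ in terms of $DL^*\phi$ continues to rest on the algebraic identity $L[\xi]\A[\xi] = i^s \xi^{\otimes s}A[\xi]$ between the Fourier symbols.
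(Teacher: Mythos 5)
Your argument reduces the desired estimate, via integration by parts and Sobolev--Poincar\'e, to the two inequalities $\Vert \nabla (Au)\Vert_{L^1} \leq C \Vert \A u \Vert_{L^1}$ and $\Vert D\tilde\phi\Vert_{L^N}\leq C\Vert D L^{\ast}\phi\Vert_{L^N}$, and you only establish these when $s=1$. For $s\geq 2$ this is not a small missing detail but the entire content of the lemma: an $L^1$-bound on a first-order derivative of $Au$ in terms of $\Vert \A u\Vert_{L^1}$ is an Ornstein-type inequality, and such inequalities are false in general (this is exactly why the Bourgain--Brezis/van Schaftingen machinery is invoked instead of a direct strong estimate). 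Your proposed bridge --- ``apply Lemma \ref{lemma41} to an auxiliary divergence-free field manufactured from $\A u$ via the annihilator'' --- cannot deliver a strong bound on $\Vert\nabla(Au)\Vert_{L^1}$ either, since Lemma \ref{lemma41} only controls dual pairings against $L^N$-gradients; so the Sobolev--Poincar\'e reduction itself has to be abandoned, not repaired. Likewise the comparison $\Vert D\tilde\phi\Vert_{L^N}\lesssim\Vert D L^{\ast}\phi\Vert_{L^N}$ is asserted but not proved, and it is doubtful in general: if $L^{\ast}[\xi]$ has a kernel for some real $\xi\neq 0$ which is not contained in the kernel of the contraction defining $\tilde\phi$, the inequality fails. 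Finally, even in your $s=1$ case you use a Sobolev--Poincar\'e inequality on $\Omega$, which is not among the hypotheses ($\Omega$ is only assumed to be a divergence-free extension domain, not Lipschitz or John).

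The actual proof estimates the pairing directly rather than any $L^1$-norm of derivatives of $Au$. Using the real constant rank property one takes an annihilator $\B$ of $\A$, so that the fields $\B_\beta\circ\A u$ assemble into a $\divergence^{k_\B}$-free vector field to which Lemma \ref{lemma41} applies. The place where your sketch is silent --- how to recover the full pairing $\int_\Omega \A u\cdot L^{\ast}\phi\dx$ from these solenoidal pieces despite $\A$ possibly being non-cancelling --- is handled by two algebraic facts: first, there exist constant maps $C_\beta$ with $\sum_\beta C_\beta\circ\B_\beta=P_{W^{\perp}}$, where $W=\bigcap_{\xi\neq 0}\Image\A[\xi]$; second, and crucially, the factorisation $D^s\circ A=L\circ\A$ forces $L[\xi]$ to vanish identically on $W$ (proved by writing $w\in W$ as $\A[\xi_1+i\xi_2](y(i))$ and using the linear independence of the tensors $D^s[\xi_1+i\xi_2]$). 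Together these give $\sum_\beta L[\xi]\circ C_\beta\circ\B_\beta=L[\xi]$, which is the identity your approach would need to discover before Lemma \ref{lemma41} can be brought to bear. As it stands, the proposal proves only the (easy) case $s=1$ under an extra regularity assumption on $\Omega$, and leaves the main case open.
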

The main difficulty to overcome in the proof of Lemma \ref{lemma5} is, of course, the absence of the cancelling condition for the operator $\A$ - otherwise it would simply follow from van Schaftingen's result Lemma \ref{lemma4}.
\begin{proof}
The differential operator $\A$ satisfies the constant rank property over $\R$. Hence, (cf. \cite{Raita}) there exists a linear differential operator $\B \colon C^{\infty}(\R^N;\R^l) \to C^{\infty}(\R^N;\R^m)$ obeying 
    \[
        \ker \B [\xi] = \Image \A [\xi] \quad \forall \xi \in \R^N \setminus \{0\}.
    \]
We may write 
    \[
        \B v = \sum_{\V \beta \V = k_{\B}} \B_{\beta} \partial_{\beta} v, \quad \B_{\beta} \in \Lin(\R^l;\R^m)
    \]
The heart of the matter are the following two claims. Let \[
    W := \bigcap_{\xi \in \R^N \setminus \{0\} } \ker \B[\xi] = \bigcap_{\xi \in \R^N \setminus \{0\}} \Image \A[\xi]. 
\]

\textbf{CLAIM 1: \footnote{Note how this compares to \cite[Lemma 2.5]{VS} where $W=\{0\}$, i.e. $\A$ is cancelling.}}
Then there are linear maps $C_{\beta} \in \Lin (\R^m;\R^l)$ such that 
   \begin{equation} \label{aux1}
       \sum_{\V \beta \V= k_{\B}} C_{\beta} \circ \B_{\beta} = P_{W^{\perp}},
   \end{equation}
where $P_{W^{\perp}}$ is the projection onto the orthogonal complement of $W$. 

\textbf{CLAIM 2:} 
For all $\xi \in \R^N \setminus \{0\}$ and all $w \in W$ we have 
    \begin{equation} \label{aux2}
        L[\xi] (w) =0.
    \end{equation}
Let us first see, how Claim 1 and Claim 2 lead to a proof of Lemma \ref{lemma5}. Both these claims imply that 
\[
    \sum_{\V \beta \V=k_{\B}} L [\xi] \circ C_{\beta} \circ \B_{\beta} = L[\xi].
\]
Therefore, on the level of differential operators, we get 
\begin{equation} \label{equality1}
\int_{\Omega} \A u \cdot L^{\ast} \phi \dx = \sum_{\V \beta \V = k_{\B}} \int_{\Omega} (\B_{\beta} \circ \A u) \cdot (C_{\beta}^{\ast} \circ L^{\ast}) \phi \dx.
\end{equation}
As $\B$ is the annihilator of $\A$ we have \[
    \sum_{\V \beta \V = k_{\B}} \partial_{\beta} \left( \B_{\beta} \circ \A u \right) =0
\]
Employing Lemma \ref{lemma41} for $(\B_{\beta} \circ \A) u $, we get for any index $\beta$
\begin{equation} \label{equality2}
    \left \V \int_{\Omega} \B_{\beta} \circ \A u \cdot C_{\beta}^{\ast} L^{\ast} \phi \dx \right \V \leq C \Vert \B_{\beta} \circ \A u \Vert_{L^1} \Vert C_{\beta}^{\ast} L^{\ast} \phi \Vert_{L^N} \leq C \Vert \A u \Vert_{L^1} \Vert L^{\ast} \phi \Vert_{L^N}.
\end{equation}
Now combining \eqref{equality1} and \eqref{equality2} yields the result. It remains to prove both Claim 1 and Claim 2.

\textit{Proof of Claim 1:} Note that $(\xi^{\beta})_{\V \beta \V = k_{\B}}$ is a basis of the vector space of polynomials of degree $k_{\beta}$. The kernel of the map
    \[
       T \colon  w \longmapsto [\B_{\beta}(w)]_{\V \beta \V =k} \in \R^l \otimes \R^{\binom{N+k_{\B}+1}{k_{\B}}}
    \]
exactly is $W$. Therefore, $T$ is injective on $W^{\perp}$. Consequently, there are linear maps $C_\beta$, such that \[
        \sum_{\V \beta \V= k_{\B}} C_{\beta} \circ \B_{\beta} = P_{W^{\perp}},
            \]
proving Claim 1.

\textit{Proof of Claim 2:} Suppose that $w \in W$ and that there is $\xi_1 \in \R^N \setminus \{0\}$, such that $L[\xi_1](w) \neq 0$. Let $\xi_2$ be linearly independent to $\xi_1$. Then for any $t \in \R$ there exists $y(t)$ such that 
    \begin{equation}\label{def:w}
        w= \A[\xi_1+t\xi_2](y(t)).
    \end{equation}
Now we use that $L$ is a homogeneous polynomial of order $s-1$. Therefore, we can find  $\lambda_1,...,\lambda_s \in \R$ such that 
\[
       0 \neq L[\xi_1](w) = \sum_{i=1}^s \lambda_i L[\xi_1+i \xi_2](w).
\]
Consequently, 
    \[
        (L \circ \A)[\xi_1](y(0)) = \sum_{i=1}^s \lambda_i (L \circ \A)[\xi_1+ i \xi_2](y(i)).
    \]
Now applying $L \circ \A = D^s \circ A$ and setting $W_i = A[\xi_1+i\xi_2](W(i)) \in \R$ one arrives at 
    \begin{equation} \label{finale}
        0 \neq D^s[\xi_1](W_0) = \sum_{i=1}^s \lambda_i D^s[\xi_1+i\xi_2](W_i) = \sum_{i=1}^s \lambda_i W_i \lambda^{s}[\xi_1+i \xi_2](1)
    \end{equation} 
But the set of $(s+1)$ vectors $D^s[\xi_1],D^s[\xi_1+\xi_2],...,D^s[\xi_1+s \xi_2]$ is linearly \emph{independent}. This contradicts \eqref{finale}. \\
Therefore, for any $w \in W$ and any $\xi \in \R^N \setminus \{0\}$ we have $L[\xi](w) =0$, proving Claim 2.

\end{proof}
\section{Proofs of Korn- and Sobolev-type inequalities} \label{proofs}
In this section, we finally present the proofs of Theorem \ref{thm1} and Theorem \ref{thm2}. We start with the Korn-type-inequality:

\begin{proof}[Proof of Theorem \ref{thm1}:]
For `\ref{thm1:1} $\Rightarrow$ \ref{thm1:2}', it is sufficient to prove the following statement: If \ref{thm1:2} is false, then there is an \emph{infinite-dimensional} vector space $V \subset W^{k,p}(\Omega,\R^d)$, such that for all elements $u \in V \setminus \{0\}$ 
    \[
        A u \neq 0 \quad \text{and} \quad  \A u =0.
    \]
Suppose now that \ref{thm1:2} does not hold. Then there is $\xi \in \C^N \setminus \{0\}$ and $v\in \ker \A[\xi] \setminus \ker A[\xi]$. Therefore, the functions 
    \[
        u_n(x) := \mathrm{Re}\left[v e^{2 \pi i n \xi \cdot x}\right]
    \]
(where $\mathrm{Re}$ denotes, as usual, the real part of a complex number) satisfy $\A u_n=0$ and $A u_n \neq 0$. More precisely,
    \[
        A u_n (x) =  \mathrm{Re} \left[(2 \pi i n)^k A [\xi] (v) e^{2 \pi i n \xi \cdot x}\right].
    \]

As all $e^{2 \pi i n \xi \cdot x}$ are linearly independent, all $u \in \spann_{n \in \N} u_n \setminus \{0\}$ satisfy $\A u=0$, but $A u \neq 0$. The vector space $\spann_{n \in \N} u_n$ is infinite-dimensional, which finishes the proof of `\ref{thm1:1} $\Rightarrow$ \ref{thm1:2}'.

For the implication `\ref{thm1:2} $\Rightarrow$ \ref{thm1:1}' we combine Lemma \ref{lemma1} and Theorem \ref{thm:NST}. Due to Theorem \ref{thm:NST}, there exists a linear operator $L$ and $s \in \N$, such that 
    \[
    \nabla^s \circ A = L \circ \A.
    \]
Due to Lemma \ref{lemma1}, there is a linear and bounded map $P_s \colon L^p(\Omega,\R^d) \to \mathrm{Pol}_{s+1}$, such that 
    \begin{equation} \label{Necasappl}
    \Vert A u - (P_s(A u )) \Vert_{L^p} \leq \Vert D^s \circ A u \Vert_{W^{-s,p}} = \Vert L \circ \A u \Vert_{W^{-s,p}} \leq C \Vert \A u \Vert_{L^p}.
    \end{equation}
The last inequality in \eqref{Necasappl} follows from the fact that $L$ is a constant coefficient differential operator of order $s$.

The left hand side can be estimated from below by 
    \[
        \inf_{\pi \in \mathrm{Pol}_{s+1}} \Vert A u - \pi \Vert_{L^p}.
    \]
Due to Lemma \ref{lem:image}, then with $\Q$ denoting the space of $\R^d$-valued polynomials of order at most $s+k+1$
    \[
     \inf_{\Pi \in \Q}\Vert A (u - \Pi) \Vert_{L^p} \leq C \Vert \A u \Vert_{L^p}.
    \]
As $W^{k,p}(\Omega,\R^d)$ is uniformly convex and $\Q$ is a closed subspace (and so is $A (\Q)$), we find that there is a linear projection map $P_{\Q} \colon W^{k,p}(\Omega,\R^d) \to \Q$, such that \[
 \Vert A (u -P_{\Q} u) \Vert_{L^p}  =    \inf_{\Pi \in \Q}\Vert A (u - \Pi) \Vert_{L^p} \leq C \Vert \A u \Vert_{L^p}.
\]
\end{proof}

Having proven Theorem \ref{thm1}, in the same fashion we may also prove the following variant by just employing Lemma \ref{lemma1} for suitable $s$.

\begin{coro} \label{coro1}
Let $\Omega \subset \R^N$ be a bounded Lipschitz domain. Suppose that $\A$ is a constant-coefficient differential operator of order $k$ satisfying the complex constant rank property and that $A$ also has constant coefficients and order $k$. Let $1<p<\infty$ and $s \in \N$. Then the following two are equivalent:
\begin{enumerate}[label=(\alph*)]
    \item There exists a finite dimensional subspace $\Q \subset W^{s,p}(\Omega;\R^d)$ such that 
        \[
            \inf_{v \in \Q} \Vert A(u-v) \Vert_{W^{s-k,p}} \leq C_p \Vert \A u \Vert_{W^{s-k,p}}
        \]
    for all $u \in W^{k,p}(\Omega;\R^d)$;    

    \item for all $\xi \in \C^N \setminus \{0\}$ we have $\ker \A[\xi] \subset \ker A[\xi]$.
\end{enumerate}
\end{coro}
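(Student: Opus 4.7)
The plan is to mirror the argument given for Theorem \ref{thm1}, replacing the role of $L^p$ by $W^{s-k,p}$, and replacing Lemma \ref{lemma1} by its Sobolev-scale version. The direction excluding the spectral condition is entirely analogous. Assuming $\ker \A[\xi] \not\subset \ker A[\xi]$ fails at some $\xi \in \C^N\setminus\{0\}$, pick $v \in \ker \A[\xi] \setminus \ker A[\xi]$ and form the real-valued exponentials $u_n(x) = \mathrm{Re}[v\, e^{2\pi i n \xi\cdot x}]$, which span an infinite-dimensional subspace of $\ker \A \setminus \ker A$. Because $Au_n$ is, up to a constant multiple of $n^k$, an exponential whose $W^{s-k,p}(\Omega;\R^d)$-norm is positive on any bounded domain, these $u_n$ prevent the inequality from holding modulo any finite-dimensional $\Q$.

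For the non-trivial direction, I would first invoke Proposition \ref{thm:NST} to fix $s' \in \N$ and a constant-coefficient homogeneous operator $L$ of order $s'$ such that $D^{s'} \circ A = L \circ \A$. The key auxiliary ingredient is a Sobolev-scale version of Ne\v{c}as' estimate: for every integer $\sigma$ (positive, zero, or negative), every $1 < p < \infty$, and every $f \in W^{\sigma,p}(\Omega;\R^e)$ there exists a bounded linear projection $P_{s'}$ onto $\mathrm{Pol}_{s'+1}$ with
\begin{equation*}
\Vert f - P_{s'} f \Vert_{W^{\sigma,p}} \leq C \Vert D^{s'} f \Vert_{W^{\sigma-s',p}}.
\end{equation*}
This extension of Lemma \ref{lemma1} follows from the original $L^p$-version by a standard Bessel-potential/duality and difference-quotient argument on Lipschitz domains; I expect this to be the technical step that needs the most care, though it is routine in the literature on Ne\v{c}as' lemma.

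Applying the above to $f = Au$ with $\sigma = s-k$ gives
\begin{equation*}
\Vert Au - P_{s'}(Au) \Vert_{W^{s-k,p}} \leq C \Vert L \circ \A u \Vert_{W^{s-k-s',p}} \leq C \Vert \A u \Vert_{W^{s-k,p}},
\end{equation*}
where the last inequality uses that $L$ is a constant-coefficient operator of order $s'$. Then Lemma \ref{lem:image} lets us lift the polynomial $P_{s'}(Au) \in \mathrm{Pol}_{s'+1}$ to a polynomial $\Pi \in \mathrm{Pol}_{s'+k+1}$ with $A\Pi = P_{s'}(Au)$, yielding
\begin{equation*}
\inf_{\Pi \in \Q} \Vert A(u - \Pi) \Vert_{W^{s-k,p}} \leq C \Vert \A u \Vert_{W^{s-k,p}},
\end{equation*}
where $\Q$ is the finite-dimensional space of $\R^d$-valued polynomials of degree at most $s'+k+1$. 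Uniform convexity of $W^{s,p}(\Omega;\R^d)$ and closedness of $A(\Q)$ then provide a bounded linear projection realising the infimum, exactly as at the end of the proof of Theorem \ref{thm1}.
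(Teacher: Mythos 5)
Your argument is essentially the paper's: Corollary \ref{coro1} is proved there by repeating the proof of Theorem \ref{thm1} verbatim, ``employing Lemma \ref{lemma1} for suitable $s$'', which is exactly your chain (the complex-exponential counterexample for one direction; Proposition \ref{thm:NST}, a Ne\v{c}as-type estimate, Lemma \ref{lem:image} and uniform convexity for the other). If anything, you are more explicit than the paper, which silently invokes the $W^{\sigma,p}$-scale (in particular negative-order, needed when $s<k$) version of Lemma \ref{lemma1} that you correctly single out as the step requiring care.
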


This intermediate result now allows us to prove Theorem \ref{thm2} (in the case $p>1$).

\begin{proof}[Proof of Theorem \ref{thm2}]
Note that for the exact same example as in the proof of  Theorem \ref{thm1}, the direction '\ref{thm2:1} $\Rightarrow$ \ref{thm2:2}' can be proved by contradiction. So we just need to proof the converse implication.

If $p>1$ due to a combination of Lemma \ref{lemma1} and Corollary \ref{coro1} we have:
\begin{equation} \label{step1}
\inf_{v \in \bar{Q}} \Vert A (u-v) \Vert_{L^{p\ast}} \leq C \inf_{v \in \Q} \Vert D \circ A (u-v) \Vert_{W^{-1,{p\ast}}} \leq C \Vert \A u \Vert_{W^{-1,{p\ast}}}.
\end{equation}
if $\bar{Q}= \Q +W$, where $\Q$ is taken from Corollary \ref{coro1} and $W$ is the space of constant functions. But the Sobolev embedding entails $L^p \hookrightarrow {W^{-1,{p\ast}}}$ and therefore
\[
\inf_{v \in \bar{Q}} \Vert A (u-v) \Vert_{L^{p\ast}} \leq C \Vert \A u \Vert_{L^p}.
\]
 If $p=1$ we cannot use the Sobolev embedding and instead use Lemma \ref{lemma5}. Note that we may estimate 
 \[
  \inf_{v \in \bar{Q}_s} \Vert A(u-v) \Vert_{L^{N/(N-1)}} \leq C \Vert (D^s \circ A) u \Vert_{W^{-s,N/(N-1)}}
 \]
 for any $s \in \N$ and $\bar{\Q}_s$ being polynomials of order at most $s$. As we may choose an $s \in \N$ and a differential operator $L$ of order $(s-1)$ such that $D^s \circ A = L \circ \A$, we have 
 \[
 \Vert (D^s \circ A) u \Vert_{W^{-s,N/(N-1)}} = \sup_{0 \neq \phi \in C_c^{\infty}(\Omega;(\R^N)^{\otimes s}} \int \A u L^{\ast} \phi \dx \Vert \phi \Vert_{W^{s,N}}^{-1}
 \]
 Emplyoing Lemma \ref{lemma5} we see that for any $\phi \in C_c^{\infty}(\Omega;(\R^N)^{\otimes s})$ we have 
 \[
 \left \V \int \A u L^{\ast} \phi \dx \right \V \leq C \Vert \A u \Vert_{L^1} \Vert D \circ L^{\ast} \phi \Vert_{L^N} \leq C \Vert \A u \Vert_{L^1} \Vert \phi \Vert_{W^{s,N}}
 \]
Combining these observations finally yields 
\[
\inf_{v \in \bar{Q}_s} \Vert A(u-v) \Vert_{L^{N/(N-1)}} \leq C \Vert \A u \Vert_{L^1}.
\]

\end{proof}

\bibliography{biblio.bib}
\bibliographystyle{abbrv}

\end{document}